\newcommand{\Spec}{{\mathrm{Spec}\, }}
\newcommand{\D}{\mathbb D}
\newcommand{\cA}{{\mathcal A}}
\newcommand{\cC}{{\mathscr C}}
\newcommand{\sX}{{\mathscr X}}
\newcommand{\cE}{{\mathcal E}}
\newcommand{\cN}{{\mathcal N}}
\newcommand{\cO}{{\mathcal O}}
\newcommand{\cP}{{\mathscr P}}
\newcommand{\cR}{{\mathscr R}}
\newcommand{\la}{\lambda}
\newcommand{\La}{\Lambda}
\newcommand{\sg}{\mathrm{Sing}}
\theoremstyle{plain}
\newtheorem{thm}{Theorem}[section]
\newtheorem{pro}[thm]{Proposition}
\newtheorem{cor}[thm]{Corollary}
\theoremstyle{definition}
\newtheorem{rem}[thm]{Remark}
\newtheorem{voi}[thm]{}
\begin{document}

\title{On the Nash problem for surfaces in positive characteristic}
\author{Augusto Nobile}

\address{Louisiana State University \\
Department of Mathematics \\
Baton Rouge, LA 70803, USA}

\subjclass{14E18, 14E15, 14B05, 14B07, 14E99, 14J99}

\keywords {Arcs, Nash space of arcs, surface, Nash mapping, minimal resolution, Artin's Lifting Theorem}

\date{December 1, 2018}
\email{nobile@math.lsu.edu}
\begin{abstract}

This paper seeks to prove the bijectivity of the ``Nash mapping'' from the set of irreducible components of the scheme parametrizing analytic arcs on an algebraic surface $X$ whose origin is a singular point, into the set of irreducible components of the exceptional locus of a minimal desingularization $X'$ of $X$ when the base field has positive characteristic. The idea is to view the surface as a specialization of another defined over a field of characteristic zero.  A number of related results are proved. Among them, the construction of a scheme of arcs for a suitable one parameter family of surfaces which, by using a theorem of M. Artin on lifting of normal surface singularities to characteristic zero, seems a reasonable candidate to be the desired tool. But there are some points, necessary for a complete proof, which are not verified yet.

\end{abstract}

\maketitle

\section*{Introduction}

In order to study the singularities of an algebraic variety $X$ over a field $k$, in the mid 1960's John Nash considered analytic arcs on $X$ with origin in $S$, the singular set of the variety. Actually, in the original paper \cite{NA} he used {\it truncations} of such arcs, up to level $n$ (that is morphisms $\Spec (k[[t]]/(t^{n+1})) \to X$), and worked over the complex numbers. He obtained varieties $ X_n^{(S)}$ parametrizing such truncations (see \cite{NA} or \cite{NO}). Later it became customary to use directly $S$-$arcs$, i.e., morphisms $\Spec (K[[t]]) \to X$ sending the closed point to $S$, where $K$ is a field containing $k$. This approch simplifies the presentation, although the arc scheme $X_{\infty}^{(S)}$ parametrizing such arcs is not of finite type over $k$. In this paper we work mostly with actual arcs, rather than truncated ones.

 In general one has to distinguish between the {\it good} irreducible components of $X_{\infty}^{(S)}$, that is those that contain a point corresponding to an arc whose image is not completely included in $S$, and the other components. But in characteristic zero or when $\dim X \le  2$ all components of  $X_{\infty}^{(S)}$ are good. 

Following Nash, if $f:X' \to X$ is a resolution of singularities of $X$, one has an injective map $\cN$ from the set of good 
irreducible components of $X_{\infty}^{(S)}$ to the set of irreducible components of the exceptional locus $E(f)$ of $f$. Now the image of $\cN$ is contained in the set of {\it essential components} of $E(f)$, i.e., those that ``appear'' in any resolution of $X$; see \cite{IK}. For a normal surface $X$, the essential components of any resolution naturally correspond to the irreducible components of a minimal resolution  of $X$. 

In \cite{NA} Nash asks whether the Nash map $\cN$ is bijective.

Ishii and Kollar in \cite{IK} show that the answer in negative if $\dim X > 3$, even in characteristic zero. A similar result was obtained when $\dim X=3$ by De Fernex in \cite{D3}.

On the positive side, Bobadilla and Pereira in \cite{FP} prove that, working over 
 a field of characteristic zero, the Nash map is bijective for surfaces. They 
 show that we may assume that $k=\mathbb C$ and use topological methods. Later this was done  algebraically by De Fernex and Docampo in \cite{DD}, where also some interesting results in higher dimension are obtained.

But, to my knowledge, it is not known what happens with surfaces in positive characteristic. The purpose of this paper is to describe an approach that might yield a proof of the bijectivity of the Nash map $\cN$ for surfaces defined over a field of characteristic $p>0$. 

 As explained in \cite{FP}, we may assume that the base field 
$k$ is algebraically closed and that the surface 
$X$ has a single normal singularity $P$. 
Actually, from the results of \cite{DD} or \cite{FP}, in this case, independently of the characteristic of $k$, to prove the bijectivity of $\cN$ reduces to a simple counting. 
 Let's review this ``classical'' counting process. 

Consider a surface $X$ over  a field $k$ (algebraically closed, of any characteristic),  
with a single normal singularity $P$. Take a minimal resolution  $f:X' \to X$ of our surface. Assume $Ex(f)$ (the exceptional set of $f$) has irreducible components $E_1, \ldots, E_m$. By Zariski's Main Theorem (\cite{H}, Corollary 11.4), since  $X$ is normal and has a single singularity, the desingularization $X'$ is necessarily {\it divisorial}, i.e., each irreducible component of $E(f)$ is one-dimensional; moreover $E(f)$ is connected.
 We write 
$N_r(X,P) := m$, or simply $N_r(X)=m$, since $P$ is the only singularity of $X$.

Consider the Nash space of arcs on $X$ based at $P$, $X_{\infty}^{(P)}$. By the existence of the injective Nash mapping $\cN$, $X_{\infty}^{(P)}$ has finitely many irreducible components, say $C_1, \ldots, C_s$.  The number $s$ (necessarily $\le m$) will be denoted by $N_n(X,P)$, or simply by $N_n(X)$. So, 
 $N_n(X) \le N_r(X)$.

Hence, to prove that the Nash mapping is surjective, it suffices (or is equivalent) to show: $m=s$, i.e., 
$$(1) \qquad N_n(X) = N_r(X) \, .$$
This equality is proved, in characteristic zero,  in \cite{FP} or \cite{DD}.

Our approach is to ``lift'' to characteristic zero, to see that the equality (1) is always valid. The main idea is, given our normal surface $X$ with a single normal singular point $P$, defined over an algebraically closed field $k$  of arbitrary characteristic, to find a family  of surfaces, parametrized by the spectrum
 of a discrete valuation ring $\Lambda$, with residue field $k$ and field of fractions $F$ of characteristic zero, such that the closed fiber is $X$. This may be regarded as a deformation of $X$. Since the geometric general fiber is defined over a field of characteristic zero, the result of \cite{FP} holds, in particular the equality (1) is valid. Hopefully, if the family is good enough, this will imply that the analog of (1) is also valid for the closed fiber $X$, i.e., that the Nash conjecture is true for it. We do not know whether this can be carried out but we hope that a slightly different statement involving instead algebroid surfaces, sufficient to prove the surjectivity of Nash's map, can be obtained. By an algebroid surface we mean the spectrum of a complete noetherian two dimensional local $k$-algebra, with residue field $k$, see \ref{Cal}.

 We shall discuss our work in this direction in the remainder of this article, which comprises three sections. 

In Section \ref{S:C}, we explain how the usual construction of the scheme $X_{\infty}^{(P)}$ of arcs on a variety $X$, relative to a unique singular point $P \in X$, can be extended to suitable families $p:\sX \to U$, where $\sX$ and $U$ are noetherian schemes, equipped with a section $s:U \to \sX$ of $p$, to yield an $\sX$-scheme $\sX_{\infty}^{(s)}$. The data $(\sX,s,U)$, where $\sX$ is a $U$-scheme and $s$ a section as above, will be called a {\it triple}. For simplicity, and because it is the situation interesting to us, we assume $\sX$ and $U$ affine. We study functorial properties of such a construction. With them, we investigate fibers of the morphism $p$ and their arc schemes.

In the ``classical'' case where $X$ is an algebraic variety, the scheme 
$X_{\infty}^{(P)}$ has finitely many irreducible components. We prove that if our triple $(\sX,s,U)$ satisfies a seemingly mild condition, that we call {\it Condition (NO}), then also  $\sX_{\infty}^{(s)}$ has a finite number of irreducible components.

In Section \ref{S:S}, we recall a result of M. Artin (in \cite{A}) that would be the basis of the intended lifting approach. Namely, 
given a normal algebroid surface $X$ over an algebraically closed field of positive characteristic and a resolution $X' \to X$ of its singularities, we would like to find a family parametrized by $U=\Spec (\La)$, where      $\La$ is a complete discrete valuation ring with residue field $k$ and field of fractions of characteristic zero, such that the resolution ``spreads'' over $U$ in a reasonable way. Artin's theorem says that this is possible, perhaps not for the given surface itself, but for a surface $X_0$ closely related to $X$ (in fact $X$ is the normalization of $X_0$), provided the resolution is ``good'', i.e., essentially that the exceptional set is a normal crossings divisor, see \ref{Se:1} and \ref{T:A}. Such a family will be called an {\it Artin family}.

Next we study how the quantity $N_r$ behaves along an Artin family. We check that $N_r$ will be constant if certain intersection numbers (involving the components of the exceptional divisors of the induced resolutions of the fibers) remain constant. The invariance of these intersection numbers is not proved in \cite{A}, but we hope this will be the case, either directly  from Artin's construction or from a slightly modified one. We indicate a possible proof.

In Section \ref{S:T}, we study the variation of the quantity $N_n$ along an Artin family. Such a family naturally induces a triple $(\sX,s,U)$, and we may apply to this one the construction of Section \ref{S:C} to obtain a Nash space of arcs $\sX_{\infty}^{(s)}$. It seems natural to use it to prove the constancy of $N_n$ along the Artin family. 

A possible way is to show that $\sX_{\infty}^{(s)}$ has finitely many irreducible components, say 
$\cA=\{\cA_1, \ldots, \cA_a    \}$, and that they induce the irreducible components of the arc spaces of the geometric generic and special fibers respectively. The finiteness of the set $\cA$ would follow if the triple  associated to our Artin family satisfies the mentioned Condition ($NO$). We prove that this condition is valid if $\La$ is equicharacteristic of characteristic zero, which unfortunately is not interesting for our applications. We make some comments about the general case, although so far we cannot prove it. 

In this section  we  also discuss another possible property of Artin families that, if valid, would indicate a form of ``equisingularity''. If one is able to construct such a family, satisfying this property, a key inequality to implement the present program would follow.

The article concludes by recalling the proof we propose to show the bijectivity of the Nash mapping for surfaces in positive characteristic, and enumerating the statements that should be verified to complete such a proof.

\section{}
\label{S:C}

\begin{voi}

\label{C1}
Here we address the construction of the arc scheme $\sX_{\infty}^{(s)}$ mentioned in the Introduction. 

We do not strive for maximum generality, but limit ourselves to a situation useful for our applications. Our basic set-up will be as follows.

{\bf Nice triples.} {\it Assume we have}:

\begin{itemize}
\item  Noetherian rings $\La$ and $R$ and  a ring homomorphism $\phi : \La \to R$,   which makes $R$ into a  $\La$-algebra.
\item An ideal $I$ of $R$ such that $R$ is $I$-complete and   the composition $\La \stackrel{\phi}{\to} R \to R/I$ is an isomorphism.
\end{itemize}

\smallskip
 
 Often we'll omit the algebra structure homomorphism $\phi$ and just consider the triple $(R,I,\La)$ (where $R$ is a $\La$-algebra and   $\La=R/I$). Such a triple satisfying the conditions just described, will be called a {\it nice triple}.

\end{voi}

\begin{voi}
\label{C2}
 In \ref{C1}, let $U=\Spec (\La)$ and $\sX = \Spec (R)$. Then, the homomorphism $\phi$ induces a projection $p: \sX \to U$ and the ideal $I$ defines a section $s:U \to \sX$ of $p$. Let  $S=V(I)$ be the image of $s$. Then $S$ and $U$ are isomorphic.

A triple $(\sX,s,U)$ of this type (i.e., one coming from a nice triple) will be called a {\it nice affine geometric triple}, or simply a {\it nice geometric triple}.

If $S=V(I)$ is the image of $s$ and $u$ is point of $U$, then the fiber $X_u :=p^{-1}(u)$ is a scheme over $k(u)$ (the residue field). Then $s(u) \in S \subset X_u$ and $s(u)$ is a rational point of $X_u$, i.e., 
$k(s(u))=k(u)$. 

Later, the case where $\La$ is a complete discrete valuation ring will be of particular importance. In this case $U$ and hence $S=s(U)$, are regular and one dimensional.  Note that $U$ has two points, $\bf o$ (closed) and $\gamma$ (the generic one, open). 

\end{voi}

\begin{voi}
\label{C3}

 Assume (in \ref{C1}) that the ideal $I$ is generated by elements $y_1, \ldots, y_n$ of the ring $R$. According to \cite{E}, Theorem 7.16, there is a unique 
 homomorphism of $\La$-algebras from $\La [[{\bf Y}]]=\La[[Y_1, \ldots, Y_n]]$ (power series in $n$ variables) to $R$ sending $Y_i$ into $y_i$, which is surjective. Let $(f_1, \ldots, f_s)\La [[\bf {Y}]]$  be the kernel.

Now let $A$ be a $\La$-algebra, $A[t]$ the ring of polynomials in $t$, $m \ge 0$ an integer, and $A[t]^{(m)}:=A[t]/(t^{m+1})$. 
 Let $\bar t$ be the image of $t$ in $A[t]^{(m)}$. 

The $A$-module  $A[t]/(t^{m+1})$ has a free basis  
$1,\bar t, \ldots, {\bar t}^m$. Hence we may write any element of $A[t]^{(m)}$ as 
$ a_{0} + a_{1} {\bar t} + \cdots + a_{m} {\bar t}^m$, for suitable unique elements $a_{j} \in A$. Let 
$$\cP(A[t]^{(m)}) := \{  \alpha \in A[t]^{(m)}: \alpha =   a_{1} {\bar t} + \cdots + a_{m} {\bar t}^m  , \, a_{j} \in A , \, \forall j\} = ({\bar t})A[t]^{(m)}$$
(an ideal of 
$A[t]^{(m)}$). 

With the notation just introduced, we have:
\end{voi}

\begin{pro}
 \label{P:C4}
If elements $z_1, \ldots, z_n$ of $\cP(A[t]^{(m)})$ are chosen, there is a unique homomorphism of $\La$-algebras 
$\La[[{\bf Y}]] \to A[t]^{(m)}$ sending $Y_j$ to $z_j$. 
\end{pro}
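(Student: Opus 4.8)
The plan is to isolate the one structural feature of $A[t]^{(m)}$ that makes the statement true: the ideal $\cP(A[t]^{(m)})=(\bar t)A[t]^{(m)}$ is \emph{nilpotent}, since $\bigl((\bar t)A[t]^{(m)}\bigr)^{m+1}=(\bar t^{\,m+1})A[t]^{(m)}=0$. Consequently the chosen elements $z_1,\dots,z_n$ lie in a nilpotent ideal, and any product of $m+1$ of them (repetitions allowed) vanishes in $A[t]^{(m)}$. Everything else is formal; in particular no noetherian or finiteness hypothesis on $A$ is needed.

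For existence I would first use the universal property of the polynomial ring to obtain the unique $\La$-algebra homomorphism $\psi\colon\La[Y_1,\dots,Y_n]\to A[t]^{(m)}$ with $\psi(Y_j)=z_j$ (recall $A$, hence $A[t]^{(m)}$, is a $\La$-algebra). By the preceding paragraph $\psi$ kills every monomial of degree $\ge m+1$, so it factors through a $\La$-algebra homomorphism $\bar\psi\colon\La[{\bf Y}]/({\bf Y})^{m+1}\to A[t]^{(m)}$, where $({\bf Y})=(Y_1,\dots,Y_n)$. Since $\La[[{\bf Y}]]/({\bf Y})^{m+1}\La[[{\bf Y}]]$ is canonically isomorphic to $\La[{\bf Y}]/({\bf Y})^{m+1}$ (both are the $\La$-module of polynomials of degree $\le m$ equipped with the truncated product), composing the canonical surjection $\La[[{\bf Y}]]\to\La[[{\bf Y}]]/({\bf Y})^{m+1}\La[[{\bf Y}]]$ with this isomorphism and with $\bar\psi$ yields a $\La$-algebra homomorphism $\La[[{\bf Y}]]\to A[t]^{(m)}$ sending each $Y_j$ to $z_j$.

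For uniqueness, suppose $g,g'\colon\La[[{\bf Y}]]\to A[t]^{(m)}$ are $\La$-algebra homomorphisms with $g(Y_j)=g'(Y_j)=z_j$ for all $j$. Here a little care is needed, because $\La[[{\bf Y}]]$ is \emph{not} generated as a $\La$-algebra by the $Y_j$, so this does not automatically force $g=g'$ and the nilpotence of $(\bar t)A[t]^{(m)}$ is genuinely used. Each generator $Y_{j_1}\cdots Y_{j_{m+1}}$ of the ideal $({\bf Y})^{m+1}\La[[{\bf Y}]]$ is sent by $g$, and by $g'$, to $z_{j_1}\cdots z_{j_{m+1}}\in\bigl((\bar t)A[t]^{(m)}\bigr)^{m+1}=0$; hence $g$ and $g'$ both annihilate $({\bf Y})^{m+1}\La[[{\bf Y}]]$, and therefore both factor through $\La[[{\bf Y}]]/({\bf Y})^{m+1}\La[[{\bf Y}]]\cong\La[{\bf Y}]/({\bf Y})^{m+1}$. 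This last ring \emph{is} generated as a $\La$-algebra by the images of $Y_1,\dots,Y_n$, so two $\La$-algebra homomorphisms out of it agreeing on these generators coincide; thus $g=g'$.

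I would close by remarking that the proposition is also the special case of \cite{E}, Theorem 7.16 already invoked in \ref{C3}, applied to the $\La$-algebra $A[t]^{(m)}$ — which is (trivially) complete and separated for the nilpotent ideal $\cP(A[t]^{(m)})=(\bar t)A[t]^{(m)}$ — together with the observation that $z_1,\dots,z_n$ belong to that ideal. There is no real obstacle in this argument; the only point deserving attention is the one flagged above, namely that uniqueness among \emph{all} $\La$-algebra homomorphisms on $\La[[{\bf Y}]]$ (not merely the continuous ones) relies on $\cP(A[t]^{(m)})$ being nilpotent.
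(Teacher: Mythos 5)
Your proof is correct and rests on the same key observation as the paper's: since $z_1,\dots,z_n$ lie in the nilpotent ideal $(\bar t)A[t]^{(m)}$, every monomial in them of degree $>m$ vanishes, so the substitution $f\mapsto f(z_1,\dots,z_n)$ is a finite sum and defines the homomorphism. Your additional argument for uniqueness --- that any $\La$-algebra homomorphism sending $Y_j\mapsto z_j$ must kill $({\bf Y})^{m+1}\La[[{\bf Y}]]$ and hence factor through the truncated polynomial ring, which \emph{is} generated by the $Y_j$ --- is a genuine improvement, since the paper's proof only exhibits existence and leaves the (non-obvious, as $\La[[{\bf Y}]]$ is not generated by the $Y_j$) uniqueness claim unaddressed.
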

\begin{proof}
 Notice that if $M={Y_1^{i_1}  \cdots Y_n^{i_n}}$ is a monomial (in $\La[[{\bf Y}]]$) where $m'=i_1+\cdots+i_n > m$, then  
${z_1^{i_1}  \cdots z_n^{i_n}}=0 $, since this element can be written as ${\bar t}^{m'}\beta$, where $\beta \in A[t]^{(m)}$. In view of this observation, given a series    $f(Y_1, \ldots, Y_n) \in \La[[\bf{Y}]]$, it makes sense to write  $f(z_1, \ldots, z_n)$ because all terms of order $>m$ vanish. Then 
 the claimed homomorphism is obtained by sending the series $f(Y_1, \ldots, Y_n) \in \La[[\bf{Y}]]$ into $f(z_1, \ldots, z_n)$. 
\end{proof}

\begin{voi}
\label{C5} 
Now consider the ring $R=\La[[{\bf Y}]]/(f_1, \ldots, f_s)$ (see \ref{C3}). Given elements $z_1, \ldots, z_n$ of $\cP(A[t]^{(m)})$ as above, there is a homomorphism of $\La$-algebras $R \to A[t]^{(m)}$ sending $y_i \in I \subset R$ to $z_i$ if and only if, in  $A[t]^{(m)}$                          
$$(1) \qquad f_j(z_1, \ldots, z_n) =0 \, ,j=1, \ldots, s \, .$$

We may write 
$z_i =  a_{i1} {\bar t} + \cdots + a_{im} {\bar t}^m$, for suitable, unique elements $a_{ij} \in A$, $i=1, \ldots, n, \, j=1, \ldots, m$. Substituting into (1), for each $j$ we get, in 
$A[t]/(t^{m+1})=A[t]^{(m)} $, 
$$f_j(a_{11} {\bar t} + \ldots + a_{1m} {\bar t}^m, \cdots,  a_{n1} {\bar t} + \cdots + a_{nm} {\bar t}^m )=0 \, ,           $$
The expression of the left hand side  may be expanded as 
$$  F_{j1}(a_{11}, \ldots, a_{nm}){\bar t} +\cdots  +        F_{jm}(a_{11}, \ldots, a_{nm}){\bar t}^m \, ,$$
for suitable polynomials $F_{jq}$, 
 $j=1, \ldots, s \, , q=1, \ldots, m$
 in the ring of formal polynomials 
$$\La[{\bf A}]:=\La [ A_{11}, \ldots, A_{nm}]$$ 
($mn$ variables).
The condition (1) 
is equivalent to 
$$F_{jq} ( a_{11}, \ldots, a_{nm})=0, ~ j=1, \ldots, s, \, , q=0, \ldots m \, .$$

Let us demote by $R_m$ (or $\cR_m(R,I,\La)$) the $\La$-algebra $\La [[{\bf A}]]/(F_{11}, \ldots, F_{sq})$.

\end{voi}

\begin{voi}
\label{C6} 
From the construction of the $\La$-algebra $R_m$, it follows that it represents a certain functor. 

Precisely, with $(R,I,\La)$ a nice triple as above letting $\cA _{\La}$ denote the category of $\La$-algebras, consider the functor $F_{(R,I)}^{(m)}: \cA _{\La} \to (Sets)$ such that if $A \in \cA _{\La}$, 
$$(1) \quad  F_{(R,I)}^{(m)}(A)= Hom^{(I)}(R, A[t]^{(m)})   $$
where the right hand side denotes the set of homomorphisms of $\La$-algebras sending $I$ into $\cP(A[t]^{(m)})$. Then from \ref{C5} we have a bijection (functorial in $A$)

$$(2) \quad F_{(R,I)}^{(m)}(A)= Hom^{(I)}(R, A[t]^{(m)}) =                Hom({\cR}_m (R,I,\La),A)     $$
with $Hom$ denoting the set of homomorphisms of $\La$-algebras. Thus, $R_m$ represents the functor $F_{(R,I)}^{(m)}$.

Note that although in the construction of $R_m$ we use a presentation $R=\La[[{\bf Y}]]/(f_1, \ldots, f_s) \, ,$ 
since $R_m$ represents a functor depending on $R$ and $I$ only, the choice of the generators $f_1, \ldots, f_s$ is irrelevant.
\end{voi}

\begin{voi}
\label{C6.1}
If $m'\ge m$,  the functorial isomorphisms (2) induce a homomorphism of $\La$-algebras $\sX_m^{(s)} \to  \sX_{m'}^{(s)}$. Passing to the direct limit, we get a $\La$-algebra
$$R_{\infty}=\cR_{\infty}(R,I,\La) : = \lim _{\rightarrow} \cR_{m}(R,I,\La) \, .$$
From the isomorphisms (2) of \ref{C6} it follows that for any $\La$-algebra $A$ we have
$$  (1)  \qquad Hom^{(I)}(R, A[[t]]) =                Hom({R}_{\infty},A) $$
where $R_{\infty} =\cR_{\infty}(R,I,\La)$ and $Hom^{(I)}$ denotes the set of homomorphisms of $\La$-algebras sending $I$ into $(t)A[[t]]$. That is, $R_{\infty}$ represents the functor 
$$F_{\infty}(R,I):\cA \to (Sets)$$
such that for a $\La$-algebra $A$, $F_{\infty}(R,I)(A)=Hom^{(I)}(R,A[[t]])$.

\end{voi}

\begin{voi}
\label{CL}

More geometrically, taking $\Spec$, in the notation of \ref{C2} we have the geometric nice triple 
$$(1) \qquad (\sX,U,s)$$
induced by $(R,I,\La)$.  The $\La$-algebra $R_m$ induces a scheme $\sX_m^{(s)}$ of finite type over $U$, with the property that for any $\La$-algebra $A$, we have a natural bijection 
$$(2) \quad     Hom ^{(S)}(\Spec(A[t]^{(m)}), \sX) = Hom(\Spec(A), \sX_m^{(s)})  \, ,        $$
where $Hom$ denotes morphisms of $U$-schemes and $Hom^{(S)}$ those morphisms sending the subscheme of $\Spec(A[t]^{(m)})$ defined by $(t)$ to $S$.

 If $m' \ge m$,  
we have an induced (affine) ``projection'' morphism  $\sX_{m'}^{(s)} \to \sX_m^{(s)}$. Thus we get a projective system of schemes (involving affine projections), passing to the limit we get an $U$-scheme  
$\sX_{\infty}^{(s)}= \Spec (R_{\infty})$, no longer of finite type. 

This scheme $\sX_{\infty}^{(s)}$ has the following property:
$$  (3) \quad Hom^{(S)} (\Spec (A[[t]]), \sX) = Hom(\Spec(A), \sX_{\infty}^{(S)})$$
where $Hom$ denotes morphisms of $U$-schemes, and $Hom^{(S)}$ those morphisms sending the subscheme of $\Spec(A[[t]])$ defined by $(t)$ to $S$.

If $K$ is a field, a morphism $\Spec (K[[t]]) \to \sX$ sending the closed point of $\Spec (K[[t]]$ to a point of $S \subset \sX$ will be called an $S$-arc. 

\end{voi}

\begin{voi}
\label{Cal} 
Let $k$ be a field and $\cC _k$ denote the class of  $k$-algebras which are a quotient of a formal power series ring $k[[x_1, \ldots, x_n]]$ with coefficients in $k$. In other words, by Cohen's Structure Theorem (see \cite{MA}, Section 29), members of $\cC _k$ are complete noetherian $k$-algebras with residue field $k$.            An {\it algebroid surface} (over $k$) is a scheme of the 
 the form $X= \Spec (R)$ with $R$ a  two-dimensional ring in $\cC _k$. The closed point of such a scheme will be called the {\it origin}. 

Assume $R \in \cC_k$, with maximal ideal $M$. Then 
$(R,M,k)$ is naturally a nice triple, or $(X, s, \Spec (k))$ (where $X=\sX=\Spec(R)$ and $s$ is induced by the quotient map $R/M \to k$) is a nice  geometric  triple. If $O$ is the closed point of $X$ let us write 
$X_m^{(O)}:=\sX_m^{(s)}$ and $X_{\infty}^{(O)}:=\sX_{\infty}^{(s)}$.

 Thus we have constructed analogs of the spaces of $m$-arcs or of arcs (
 to a point), discussed in the Introduction  
 for an algebroid scheme $X$ whose closed point is a rational one, even if the base field $k$ is not algebraically closed.

\end{voi}

\begin{voi}
\label{C7}
The  algebra $R_m$ has a useful ``change of base ring'' property, that we explain next. 

Let $\psi: \La \to \La'$ be a homomorphism of noetherian rings. Given a nice triple  $(R,I,\La)$, consider the new triple $(R',I',\La ')$ obtained as follows. Let  
$I_{\La}=I(R \otimes _{\La} {\La '})$, $R'$ be the $I_{\La}$-completion of $R \otimes _{\La} \La'$, and $I'={\psi (I)} R'$. Then $( R', I',\La')$ is a nice triple, i.e., it satisfies the conditions of the set-up of \ref{C1}. 

Note that if $R$ admits a presentation 
$\pi:\La[[Y_1, \ldots, Y_n]] \to R$ with $\pi$ surjective and kernel $(f_1, \ldots, f_s)$, then $R'$ admits a presentation 
$\pi':\La '[[Y_1, \ldots, Y_n]] \to R'$,  where if $\pi(Y_i)=y_i \in R$, then  $\pi'(Y_i)=y'_i$ for all $i$, where $y_i'$ is the image of $y_i$ via the natural homomorphism $R \to R'$ (composition of $R \to R \otimes _{\La}
\La'$  and the mentioned completion). Moreover, if $f'_i$ is the image of $f_i$ in $\La'[[Y_1, \ldots, Y_n]]$, the kernel of $\pi '$ is the ideal $(f'_1, \ldots, f'_s)$. From this observation and the construction of \ref{C5}, it is easily checked that 
$\cR (R,I,\La) \otimes _ {\La}    {\La '} $ represents the functor $F^{(m)}_{(R',I')}$. That is, 
$\cR (R',I',\La')=   \cR (R,I,\La) \otimes _ {\La}    {\La '}$ (ordinary tensor product).

Notice that instead, $R'$ is the completed tensor product of $R$ and $\La'$ over $\La$, if in $R$ we consider the $I$-adic topology and both in $\La$ and $\La'$ the discrete ones.

\end{voi}

\begin{voi}
\label{C9}
In particular, given a nice triple $(R,I,\La)$ as in \ref{C1}, if $P$ is a prime ideal of $\La$ and $k(P)$ is the residue field of ${\La}_P$, we may take as our homomorphism  $\La \to \La '$  the  composition of the localization map $\La \to \La_P$ and the quotient map $\La_P \to \La_P/max(\La_P)=k(P)$. As in \ref{C7}, we get an induced nice triple $(R',I',k(P))$. We also get, as in \ref{C5} and \ref{C6.1}, the rings 
$$ (1) \quad  \cR_{m}(R',I',k(P)) ~ {\mathrm {and}} ~    \cR_{\infty}(R',I',k(P))$$
corresponding to truncated arcs and arcs respectively.

More geometrically, as in \ref{C2} we have the geometric nice triple $(\sX,s, U)$ induced by $(R,I,\La)$.  If $u$ is the point of  $U=\Spec (\La)$ corresponding to the prime ideal $P$,  we obtain (from 
$(R',I',k(P))$) a nice geometric triple $(\sX' _u, s', \Spec (k(P)))$. We also get, taking Spec in (1) above, induced $k(P)$-schemes 
$\sX_m ^{s(u)}$ and $\sX _{\infty} ^{s(u)}$.

If in our geometric triple, the projection is $p:\sX \to U$, $u \in U$ (corresponding to the prime ideal $P \in \La $), and $X_u=p^{-1}(u)$, we get an induced triple $(X_u,s(u),k(u))$. However, this is not necessarily nice: the fiber $X_u$ is the spectrum of $R \otimes _{\La} k(u)$, which in general is not a complete ring. To get a nice triple we must proceed as above, i.e., consider the ring $R'$ described in \ref{C7}.
\end{voi}

\begin{voi}
\label{C10}

If $s(u)=x \in X_u$,  
 ${\widehat{\cO}}_{X_u,x}$ is the usual completion of $\cO_{X_u,x}$ with respect to its maximal ideal,  
$\widehat X _u = \Spec ({\widehat{\cO}}_{X_u,x})$, and 
$O \in \widehat X _u $ is its closed point, then there is an identification  
$${(\sX _m ^{(s)})}_u= {(\widehat X _u)}_m^{(O)}$$
(notation of \ref{C9}). Similar considerations apply if $m$ is substituted by $\infty$, or if we consider the geometric fiber at $u$ instead.

\end{voi}

\begin{voi}
\label{C8} 
If $\alpha$ is a point of $\sX^{(s)}_{\infty}$ and $K=k(\alpha)$ is its residue field, then there is a natural morphism 
$\Spec(K) \to \sX_{\infty}^{(s)}$ whose image is $\alpha$. By (2), this corresponds to an arc 
$D_K \to \sX$, where $D_K=\Spec(K[[t]])$, sending the closed point of $D_K$ to a point in $S$.

Conversely, an element of 
$Hom^{(S)} (\Spec (L[[t]],\sX)$ with $L$ a field which is a $\Lambda$-algebra, determines, by (3) of \ref{CL}, a point of $\sX_{\infty}^{(S)}$, namely the image of 
$\Spec(L)$ via the corresponding morphism $Hom(\Spec(L), \sX_{\infty}^{(S)})$.
\end{voi}

\begin{voi}
 \label{C11}
When we work with a variety $X$ over an algebraically closed field, an important result says that $X_{\infty}^{(S)}$, the scheme of arcs relative to a closed set $S \subset X$, has finitely many irreducible components. We shall see that the same conclusion holds for the space $\sX_{\infty}^{(s)}$ (corresponding to a nice triple $(\sX,U,s)$), at least if 
$U$ is the spectrum of a discrete valuation ring $\La$ and the following condition is satisfied. 

{\it Condition (NO)}. With the notation of \ref{C2}, let us say that a nice geometric triple $(\sX,U,s)$ with $U=\{\gamma, \bf o \}$ the spectrum of a discrete valuation ring, ($\gamma$ being the generic point), satisfies {\it condition ($NO$)} if for every irreducible component $\cA$ of  $ \sX _{\infty} ^{(s)}$ we have:
 $$\cA \cap (\sX _{\infty}^{(s)})_{\gamma} \not= \emptyset.$$

The following question ensues: {\it is Condition ($NO$) always valid?}

We do not know the answer, but some partial results are discussed in the next section.

\end{voi}

\begin{pro}
 \label{P:irr}
Consider a nice geometric triple $(\sX,s, U)$ (where $U$ is the spectrum of a discrete valuation ring $\La$) that satisfies Condition ($NO$). Let $\gamma$ be the generic point of $U$, $\{\cA_i\}$ (with $i$ in a suitable index set $I$) the set of irreducible components of 
 $ \sX _{\infty} ^{(s)}$ and $\cA^{\gamma}_i:= \cA_i \cap (\sX _{\infty}^{(s)})_{\gamma}$, for all $i$. Then  $\{A^{(\gamma)}_i\}_{i \in I}$ is the set of irreducible components $\cA$ of the fiber $(\sX _{\infty}^{(s)})_{\gamma}$.
\end{pro}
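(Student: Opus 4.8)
The plan is to reduce the statement to an elementary topological fact about open subsets of a space, and then to invoke Condition ($NO$) directly. The first point is that the generic fiber $(\sX_\infty^{(s)})_\gamma$ is an \emph{open} subscheme of $\sX_\infty^{(s)}$. Indeed, since $\La$ is a discrete valuation ring with uniformizer $\pi$, every nonzero element of $\La$ is a unit times a power of $\pi$; hence $k(\gamma)=\mathrm{Frac}(\La)=\La[1/\pi]$ and $\Spec k(\gamma)$ is the open subscheme $U\setminus\{\mathbf{o}\}$ of $U$. Writing $\sX_\infty^{(s)}=\Spec R_\infty$ and keeping the name $\pi$ for the image of the uniformizer in $R_\infty$, base change gives $(\sX_\infty^{(s)})_\gamma=\Spec(R_\infty\otimes_\La\La[1/\pi])=\Spec(R_\infty[1/\pi])$, which is the standard open subset $D(\pi)\subseteq\Spec R_\infty$.

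Next I would prove the following standard lemma. Let $Z$ be a topological space whose irreducible components are $\{\cA_i\}_{i\in I}$ (pairwise distinct), and let $V\subseteq Z$ be open. Then: (i) for each $i$ with $\cA_i\cap V\neq\emptyset$, the set $\cA_i\cap V$ is an irreducible component of $V$, and $\cA_i$ is its closure in $Z$; (ii) every irreducible component of $V$ equals $\cA_i\cap V$ for a unique $i\in I$ (necessarily with $\cA_i\cap V\neq\emptyset$). Part (i) uses that a nonempty open subset of an irreducible space is irreducible and dense: so $\cA_i\cap V$ is irreducible, closed in $V$, and dense in $\cA_i$; if an irreducible closed subset $Z'$ of $V$ contains $\cA_i\cap V$, then the closure of $Z'$ in $Z$ lies in some $\cA_j$, whence $\cA_i=\overline{\cA_i\cap V}\subseteq\cA_j$, so $\cA_i=\cA_j$ and $Z'=\cA_i\cap V$. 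For (ii), the closure in $Z$ of an irreducible component $W$ of $V$ lies in some $\cA_i$, so $W\subseteq\cA_i\cap V$, and since $\cA_i\cap V$ is irreducible and closed in $V$ we get $W=\cA_i\cap V$; uniqueness follows from the density claim, since $\cA_i\cap V=\cA_j\cap V\neq\emptyset$ forces $\cA_i=\overline{\cA_i\cap V}=\overline{\cA_j\cap V}=\cA_j$.

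Finally I would apply the lemma with $Z=\sX_\infty^{(s)}$ and $V=(\sX_\infty^{(s)})_\gamma$, taking $\{\cA_i\}_{i\in I}$ to be the irreducible components of $\sX_\infty^{(s)}$. Condition ($NO$) says exactly that $\cA_i\cap V=\cA_i^{(\gamma)}\neq\emptyset$ for every $i\in I$. By (i) each $\cA_i^{(\gamma)}$ is then an irreducible component of $V$, and these are pairwise distinct; by (ii) they exhaust the irreducible components of $V$. Hence $\{\cA_i^{(\gamma)}\}_{i\in I}$ is precisely the set of irreducible components of $(\sX_\infty^{(s)})_\gamma$, and $i\mapsto\cA_i^{(\gamma)}$ is a bijection onto it.

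I do not anticipate a real obstacle: the proposition is essentially formal, and the only content of hypothesis ($NO$) that enters is that it prevents an irreducible component of $\sX_\infty^{(s)}$ from lying entirely in the special fiber, which is exactly what could otherwise cause the assignment $i\mapsto\cA_i^{(\gamma)}$ to fail to land in, or to surject onto, the components of the generic fiber. The one place asking for a little care is the topological lemma — in particular the density assertion — but those arguments require no noetherian or finiteness hypotheses, so the fact that $\sX_\infty^{(s)}$ is not of finite type is harmless.
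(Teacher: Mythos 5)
Your proof is correct and follows essentially the same route as the paper: both identify the generic fiber as an open subscheme of $\sX_{\infty}^{(s)}$ and then use the standard facts that a nonempty open subset of an irreducible space is irreducible, closed in the open set, and dense in the component, with Condition ($NO$) supplying exactly the nonemptiness needed for the density and distinctness claims. Your explicit identification of the fiber with the principal open set $D(\pi)$ and the cleanly isolated topological lemma make the write-up somewhat more self-contained, but the substance is the same.
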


\begin{proof}
Note
 that we have a commutative diagram 
\begin{displaymath}
\begin{array}{ccccc}

{(X_{\gamma})}_{\infty}^{(Q)}  &  \stackrel{\alpha_1}{\rightarrow}&    {\sX}_{\infty}^{(s)}  &  {\rightarrow} &   \sX _{\infty}    \\

{\downarrow}& &{\downarrow}& & {\downarrow}   \\

{\{Q\}}  &  \stackrel{\alpha_2}{\rightarrow}&    S  &  {\rightarrow} &   \sX    \\

{\downarrow}& & &{\searrow} &   \downarrow \\

{\{\gamma\}}  &  &  \stackrel{\alpha_3}\longrightarrow &   &   U    \\

\end{array}
\end{displaymath}
where $Q=s(\gamma)$, $S=Im(s)$, and          $\alpha_1$, $\alpha_2$ and $\alpha_3$ are open immersions. Now, from 
$$\sX_{\infty}^{(s)} =  \bigcup _{i \in I} \cA_i         $$
(because the $\cA_i$ are the irreducible components of $\sX_{\infty}^{(s)}$), by intersecting with ${(X_{\gamma})}_{\infty}^{(Q)}={(\sX_{\infty}^{(S)})}_{\gamma})$ we get 
${(X_{\gamma})}_{\infty}^{(Q)} = \bigcup _{i \in I}{\cA}^{\gamma} _i$. To show that the ${\cA}^{\gamma}_j$ are the irreducible components we have to show that:  

(i) Each 
${\cA}^{\gamma}_j$ is irreducible, and 

(ii) We do not have any inclusion
$ {\cA}^{\gamma}_i \subseteq {\cA}^{\gamma}_j, ~ j \not= i$. 

To show (i), note that $\cA_j$ is irreducible in $\sX_{\infty}^{(s)}$. Now, $\cA^{\gamma}_j=\cA_j \cap {(X_{\gamma})}_{\infty}^{(Q)}$ is an open in $\cA_j$. Since an open in an irreducible space is irreducible,  
$\cA_j$ is irreducible.

Now, to prove (ii), assume $ {\cA}^{\gamma}_i \subseteq {\cA}^{\gamma}_j$ for some pair $ i \not= j$.
 Then taking closures in ${\sX}_{\infty}^{(s)}$, and noticing that  
$ {\cA}^{\gamma}_j$ is dense in $\cA_j$ for all $j$, because Condition ($NO$) holds.
 we get 
${\cA}_i \subseteq  {\cA}_j$, which is not possible since 
$\sX_{\infty}^{(s)}= \bigcup _{i \in I }\cA_i $ is the decomposition of $\sX_{\infty}^{(s)}$ as the union of irreducible components. 
\end{proof}

\begin{cor}
\label{C:fin}
In  \ref{P:irr}, assume in addition that the characteristic of the field $k(\gamma)$ is zero. Then, $\sX_{\infty}^{(s)}$ has finitely many irreducible components.
\end{cor}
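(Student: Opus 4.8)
The plan is to use Proposition \ref{P:irr} to transfer the question to the generic fibre of $\sX_\infty^{(s)}$, to identify that fibre with the Nash space of arcs of an algebroid scheme over the characteristic-zero field $k(\gamma)$, and there to invoke the classical finiteness theorem recalled in the Introduction.

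First I would record the combinatorial consequence of Proposition \ref{P:irr}. Keeping its notation, the sets $\cA_i^\gamma=\cA_i\cap(\sX_\infty^{(s)})_\gamma$ are precisely the irreducible components of $(\sX_\infty^{(s)})_\gamma$, and since Condition ($NO$) holds each $\cA_i^\gamma$ is a non-empty open subset of the irreducible space $\cA_i$, hence dense in it; as in step (ii) of the proof of \ref{P:irr} this makes $i\mapsto\cA_i^\gamma$ injective. Therefore $\sX_\infty^{(s)}$ and $(\sX_\infty^{(s)})_\gamma$ have sets of irreducible components of the same cardinality, and it suffices to bound the number of components of the generic fibre. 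To do so I would set $Q=s(\gamma)\in S\subset X_\gamma=p^{-1}(\gamma)$, a rational point of $X_\gamma$ by \ref{C2}, and use the identification of \ref{C10} (with $u=\gamma$): $(\sX_\infty^{(s)})_\gamma=(\widehat X_\gamma)_\infty^{(O)}$, the arc space based at the closed point $O$ of $\widehat X_\gamma:=\Spec\widehat\cO_{X_\gamma,Q}$. Because $\mathrm{char}\,k(\gamma)=0$, Cohen's structure theorem presents $\widehat\cO_{X_\gamma,Q}$ as a quotient of a power series ring over $k(\gamma)$, so $\widehat X_\gamma$ is an algebroid scheme over $k(\gamma)$ with rational closed point in the sense of \ref{Cal}.

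It then remains to see that the Nash space of arcs based at a point of an algebroid scheme over a field of characteristic zero has finitely many irreducible components. If $\widehat\cO_{X_\gamma,Q}$ is regular this space is irreducible and we are done; otherwise, since $k(\gamma)$ has characteristic zero and $\widehat\cO_{X_\gamma,Q}$ is excellent, $\widehat X_\gamma$ admits a resolution of singularities $f\colon Y\to\widehat X_\gamma$, whose exceptional locus has only finitely many irreducible components. As recalled in the Introduction, in characteristic zero every irreducible component of the arc space based at a point of the singular locus is good, and the Nash map embeds the good components into the (finite) set of components of the exceptional locus of $f$. Applying this at $O$ gives that $(\widehat X_\gamma)_\infty^{(O)}$, and hence $\sX_\infty^{(s)}$, has finitely many irreducible components.

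The step I expect to be the main obstacle is this last one: one must be sure that the classical ingredients --- existence of a resolution, injectivity of the Nash map, and the fact that all components of the arc space are good in characteristic zero --- are available for the algebroid scheme $\widehat X_\gamma$ and not merely for algebraic varieties. Resolution is unproblematic over an excellent base of characteristic zero, and the Nash-map argument is local and should carry over verbatim; if one wishes to invoke only the variety case, one can try to exploit that the $m$-jet scheme $(\widehat X_\gamma)_m^{(O)}$ depends only on the finite-length $k(\gamma)$-algebra $\widehat\cO_{X_\gamma,Q}/\mathfrak{m}^{m+1}$, and use Artin approximation to replace $\widehat X_\gamma$ by a neighbourhood of a closed point of an affine $k(\gamma)$-variety with the same jet schemes --- but making the resulting comparison of the irreducible components of the two inverse limits precise is exactly what would still need to be checked.
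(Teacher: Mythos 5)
Your proof follows the paper's own argument: reduce via Proposition \ref{P:irr} to counting the irreducible components of the fibre of $\sX_{\infty}^{(s)}$ at $\gamma$, identify that fibre via \ref{C10} with the arc space $(\widehat X_\gamma)_\infty^{(O)}$ of the completed local ring at $s(\gamma)$, and invoke the classical characteristic-zero finiteness of its set of components. The only divergence is in the last step: the paper first base-changes to the algebraic closure of $k(\gamma)$, using the surjection $Ir((\sX_{\infty}^{(s)})_{\gamma'})\to Ir((\sX_{\infty}^{(s)})_{\gamma})$, so that it can quote the finiteness theorem of \cite{IK} (Theorem 2.15) directly, whereas you re-derive that finiteness over $k(\gamma)$ itself from resolution of singularities and the injectivity of the Nash map --- a valid variant in characteristic zero, though if you prefer to cite the finiteness result as stated over an algebraically closed field, the paper's extra base-change step is the cleaner way to arrange it.
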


\begin{proof}
 Let $k=k(\gamma)$, $k'$ be the algebraic closure of $k$ and $(\sX _{\infty}^{(s)})_{\gamma '}$ be the geometric fiber at $\gamma$. Then by \ref{C7}, 
the geometric fiber $(\sX _{\infty}^{(s)})_{\gamma '}$ is the pull-back of the fiber $(\sX _{\infty}^{(s)})_{\gamma}$ via the natural morphism 
$\Spec (k') \to \Spec (k)$. But it is known that, 
denoting by $Ir(Z$) the set of irreducible components of a scheme $Z$, there is a surjective mapping 
$Ir((\sX _{\infty}^{(s)})_{\gamma '})    \to Ir((\sX _{\infty}^{(s)})_{\gamma})$. So, to prove the corollary it suffices to show that 
$Ir ((\sX _{\infty}^{(s)})_{\gamma '}) $ is finite. But, if $x' = s(\gamma ') \in X_{\gamma '}$,  $X' = \Spec (\widehat {\cO}_{X_{\gamma ', x'}})$ and $Q'$ is the closed point of $X'$, then by \ref{C10} there is an identification 
 $(\sX _{\infty}^{(s)})_{\gamma'}= (X')_{\infty}^{(Q')}$. The finitude of $Ir ((X')_{\infty}^{(Q')})$ is known (see \cite{IK}, Theorem 2.15).
\end{proof}

\begin{rem}
 \label{R:good}
In \ref{C:fin} we may drop the assumption on characteristic zero if we substitute ``irreducible components'' by ``good irreducible components'' (see the Introduction.)
\end{rem}

\section{}  
\label{S:S} 

In this Section we introduce a reasonable ``candidate'' to be the family of surfaces mentioned in 
 the Introduction. 
 For that, we shall use the results of Section \ref{S:C} together with a theorem of M. Artin that we shall review  next.  Before, we recall some notions.

\begin{voi}
 \label{Se:1}

Let $X$ be a normal surface defined over an algebraically closed field. A {\it good resolution} of the singularities of $X$ is a proper birational morphism 
$f:X_1 \to X$, with $X_1$ regular, which is an isomorphism off $\sg (X)$, such that the exceptional set $E(f)$ of $f$ consists of regular curves $E_i \subset X_1$ so that 
 (i) no point of $X_1$ belongs to three components of $E(f)$, and (ii) any two components of $E(f)$ meet transversally at every common point.
  
A good resolution  of $X$ is {\it very good}  if given two different components $E_i$ and $E_j$ of $Ex(f)$ then  $E_i \cap E_j$ is either empty or it has just one point.
\end{voi}

Next we state the mentioned theorem of Artin. In \ref{Cal} the class of rings $\cC _k$ was introduced.   
\begin{thm}
\label{T:A}
 Let $R \in \cC _k$ be a complete normal two dimensional domain, where  $k$ is an algebraically closed field of arbitrary  characteristic. Let $X=Spec (R)$ and 
$f: X_1 \to X $ be a  good resolution of singularities, with exceptional divisor having irreducible components $E_1, \ldots, E_m$.  
 Then, there exist:

\begin{itemize}

\item[(a)] a complete discrete valuation ring $\La$, with residue field $k$ and field of fractions $F$ of characteristic zero,

\item[(b)] a local complete normal $\Lambda$-algebra $\tilde R$ of dimension 3,

\item[(c)] a morphism $s: U \to \sX$ (where $U=\Spec (\La)$, and $\sX=\Spec(\tilde R)$)  that is a section of the natural map $\pi: \sX \to U$,

\item[(d)] a resolution of singularities $\tilde f : {\tilde \sX} \to \sX$,

\item[(e)] a local subring $R_0 \subset R$ such that $R$ is the normalization of $R_0$,
\end{itemize}
in such a way that these objects satisfy the following:

\begin{itemize}
\item[]
\begin{itemize}
\item[(i)] $\tilde f$ is an isomorphism off $Im(s)=S  \subset \sX$, and ${\tilde f}^{-1}(S)$ is the union of surfaces  
 ${\tilde E}_i$, smooth over $U$, with normal crossings.
\item[(ii)] On the closed fiber of $\tilde \pi : {\tilde \sX} \to U$ (where $\tilde \pi = \pi \tilde f)$, $\tilde \sX$ and ${\tilde E}_i$ induce $X_1$ and $E_i$, $i=1, \ldots, m$, respectively,
\item[(iii)] The closed fiber $X_0$ of $\pi$ is of the form $X_0=\Spec (R_0)$. 
\end{itemize}
\end{itemize}


\end{thm}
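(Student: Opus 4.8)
This is a theorem of M.~Artin, so the plan is to reproduce the main steps of his argument; see \cite{A}.

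The plan is to lift not the singular surface $X$ itself but its good resolution $X_1$ together with the exceptional configuration, and then to contract again over the base. First I would take $\La$ to be the ring of Witt vectors $W(k)$ (allowing a finite ramified extension of it later, which is why the statement only requires \emph{some} complete discrete valuation ring with residue field $k$ and fraction field of characteristic zero). Since $X_1$ is regular it is smooth over $k$, hence formally smooth, so the obstructions to deforming it vanish and it lifts to a formal scheme smooth over $\mathrm{Spf}(\La)$; and since $E=\bigcup E_i$ is a normal crossings divisor in the smooth scheme $X_1$, the pair $(X_1,E)$ is log smooth and lifts as well, giving a relative normal crossings divisor $\bigcup\widetilde E_i$ with each $\widetilde E_i$ smooth over $U$ and restricting to $E_i$ on the closed fibre. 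Algebraizing this formal object gives the smooth model $\tilde\sX$ of~(d) together with the divisors $\widetilde E_i$, hence conclusions (i) and (ii).

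Next I would contract $\bigcup\widetilde E_i$ inside $\tilde\sX$ over $U$. The essential input is that, $X$ being normal with an isolated singularity, the intersection matrix $(E_i\cdot E_j)$ is negative definite (a classical consequence of $f_*\cO_{X_1}=\cO_X$ and Zariski's Main Theorem); one then invokes Artin's algebraic contractibility criterion, the algebraic analogue of Grauert's contractibility theorem, in the relative setting over $\mathrm{Spf}(\La)$, followed by his theorems on algebraization of formal moduli, to pass from the resulting formal contraction to an honest morphism $\tilde f\colon\tilde\sX\to\sX=\Spec(\tilde R)$ with $\tilde R$ a complete normal local $\La$-algebra of dimension $3$, a section $s$ of $\pi$, and $\tilde f^{-1}(S)=\bigcup\widetilde E_i$ (this is where one may be forced to enlarge $\La$). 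The closed fibre of $\tilde f$ is then a proper birational morphism $X_1\to X_0$ contracting $E$; because $X_1\to X$ is the resolution of the normal surface $X$ carrying these same exceptional curves, the normalization of $X_0$ is $X$. Setting $R_0:=\tilde R\otimes_\La k$ (a two-dimensional local ring, using that $\tilde R$ is $\La$-flat), the fact that formation of the contraction need not commute with restriction to the closed fibre makes the natural map $R_0\to R$ finite and birational but in general not an isomorphism; it exhibits $R$ as the normalization of $R_0$, which is (iii) and (e), with $X_0=\Spec(R_0)$.

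The hard part is the middle step: performing the contraction of the exceptional configuration \emph{in the one-parameter family over $\La$} and then algebraizing the formal datum it produces. This is precisely where Artin's ``algebraization of formal moduli'' machinery and his algebraic contractibility criterion enter, and it is also the source of the subtlety that forces the auxiliary surface $X_0$ (with $X$ merely its normalization) to appear instead of $X$ itself, since the contraction does not in general commute with restriction to the closed fibre.
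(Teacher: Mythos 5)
The paper does not prove this statement at all: it is quoted verbatim (in the author's notation) from Artin's 1966 paper \cite{A}, so the only thing to compare your sketch against is Artin's original argument. Your outline does follow the broad shape of that argument --- lift the good resolution $X_1$ together with its normal crossings exceptional configuration to a family over a mixed-characteristic complete discrete valuation ring, then contract the lifted configuration over the base, accepting that the contraction does not commute with passage to the closed fibre and hence produces an auxiliary $X_0$ with normalization $X$ rather than $X$ itself. That last observation, and the role of negative definiteness of $(E_i\cdot E_j)$ in making the contraction possible, are exactly the right points to emphasize.

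There is, however, one step whose justification as written is wrong. You argue that $X_1$ lifts because ``it is smooth over $k$, hence formally smooth, so the obstructions to deforming it vanish.'' Formal smoothness of $X_1$ over $k$ concerns lifting morphisms \emph{into} $X_1$ and says nothing about deformations \emph{of} $X_1$; the obstruction to lifting the pair $(X_1,E)$ over an infinitesimal thickening of the base lives in $H^2(X_1,T_{X_1}(-\log E))$, and for a general smooth proper surface in characteristic $p$ this group is nonzero (there are smooth projective surfaces that do not lift to characteristic zero). The reason the lifting is unobstructed here is that $X_1$ is proper over the affine scheme $X=\Spec(R)$ with fibres of dimension at most one, so $H^2(X_1,\mathcal F)=H^0(X,R^2f_*\mathcal F)=0$ for every coherent sheaf $\mathcal F$; equivalently, Artin works with the formal completion of $X_1$ along the one-dimensional set $E$, where all $H^2$'s vanish. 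Relatedly, your order of operations --- first ``algebraize'' the formal lift of $X_1$, then contract --- is not quite how the argument can go: a formal deformation of a non-proper scheme has no a priori algebraization, and Artin instead performs the contraction at the formal level (negative definiteness supplies an ample sheaf on the formal neighbourhood of $E$, hence the complete local ring $\tilde R$) and only then invokes Grothendieck's existence theorem over $\Spec(\tilde R)$ to algebraize $\tilde f:\tilde\sX\to\sX$ as a proper morphism. With those two corrections your sketch matches the intended (cited) proof.
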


In other words, the theorem asserts the existence a family $\pi: \sX \to U$ having certain properties. A family satisfying the conditions and  conclusions of Theorem \ref{T:A} will be called an \it Artin family (of surface singularities).

\begin{rem}
\label{S:R1}
In the notation of the theorem, we have:

 (a) The scheme  $X_0$ is necessarily integral and its origin
$P_0$ is an {\it unibranch} point of $X_0$, i.e., the integral closure of ${\cO}_{X_0,P_0}$ is a local ring (\cite{G}, III (4.3.6)). In particular, the normalization morphism $g:X \to X_0$ (induced by the inclusion $R_0 \subset R$) is a homeomorphism, thus the origin of $X$ is the only point lying over the origin of $X_0$.

(b) Let $f_0=gf$, then $f_0:X_1 \to X_0$ is a good resolution of $X_0$. More generally, if $f':X' \to X$ is a resolution of $X$, then the composition 
$gf':X' \to X_0$ is a resolution of $X_0$. If $f'$  is a minimal resolution of $X$, then $gf'$ is a minimal resolution of $X_0$. If $f'$ is  good,  $gf'$ is also good.

\end{rem}

\begin{voi}
\label{Se:2}
Some of the notions presented in 
 the Introduction 
can be discussed in a somewhat more general context. Namely, assume $X_0$ is a surface (algebraic or algebroid) over an algebraically closed field $k$ with a single unibranch singularity $P_0$. Then its normalization $\eta: X \to X_0$ is a homeomorphism, in particular there is a single point $P \in X$ corresponding to $P_0$. Note that 
if $f: X_1 \to X$ is the minimal resolution of $X$, the composition $\eta f$ is the minimal resolution of $X_0$. We may define 
$N_r(X_0,P_0):=N_r(X,P)$. As before, in general we simply write $N(X_0)$. 
\end{voi}

\begin{voi}
\label{Se:3}

 Now we shall study some connections between minimal resolutions and good  resolutions of a surface. 

Consider a surface $X_0$ over an algebraically closed field $k$ of arbitrary characteristic,  with a single unibranch singularity $P_0$. Then there is a minimal resolution $f':X' \to X_0$ (obtained, as in \ref{Se:2}, from a minimal resolution of the normalization of $X_0$).  The irreducible components 
 of $Ex(f')$ are curves, perhaps singular and meeting badly. Let us also fix a 
 good resolution$f_1:X_1 \to X_0$, where the components of $Ex(f_1)$ are smooth curves, and $Ex(f_1)$ is a normal crossings divisor. We have a proper birational morphism $g:X_1 \to X'$ 
such that there is a commutative triangle 
 
\begin{displaymath}
\begin{array}{ccc}

X_1 &  \stackrel{g}{\rightarrow}&   X'   \\

&{f_1}{\searrow} & {f' \downarrow} \\

 & & X_0     \\
\end{array}
\end{displaymath}

Ordering things suitably, we may assume that the components $E'_1, \ldots, E'_m $ of $Ex(f')$ and those of $Ex(f_1)$, 
$E_{11}, \ldots, E_{1m}, \ldots, E_{1m_1}$, satisfy the following condition: $E_{1j}$ is the strict transform of $E'_j$ via $g$, for $j=1, \ldots, m$, while 
$g(E_{1,j})$ is a point of $X'$, for $j > m$.

Then, with the notation just used, $E_{11}, \ldots, E_{1m}$ 
 are precisely the essential components of the exceptional divisor of $f_1$, i.e., those that appear in any resolution of $X_0$.

\end{voi}

\begin{voi}
\label{V:Se4}
Now let $X_1$ and $X_2$ be surfaces over algebraically closed fields $k_1$ and $k_2$ respectively, both with a single unibranch singularity.  Assume $f_i:Z_i \to X_i , \, i=1,2$ is a good resolution and that $Ex(f_i)$ has  $m$ irreducible components $E_{i1}, \ldots, E_{im}$, $i=1,2$, and $j=1, \ldots, m$. Suppose we are able to order our objects in such a way that via the correspondence $E_{1j} \to E_{2j}$, we have  
 $(E_{1j}.E_{1q}) = (E_{2j}.E_{2q})$ for all possible pairs $j,q$ (i.e., the respective intersection numbers matrices coincide).
  Then, $E_{1j}$ is essential for $f_1$ if and only if $E_{2j}$ is essential for $f_2$.

Hence, if $f_i:X_i' \to X_i$, is the    the minimal resolutions of $X_i$,  $i=1,2$, then the number of irreducible components of the exceptional divisor of $f_1$ is the same as the  number of irreducible components of the exceptional divisor of $f_2$.
\end{voi}

\begin{rem}
 \label{R:Ss}
We hope that in a suitable Artin family (Theorem \ref{T:A}, whose notation we follow), if $X_{\bar \gamma}$ is the geometric general fiber of $\tilde \pi$, we have
$$(1) \qquad N_r(X_0) = N_r(X_{\bar \gamma})\,.$$
This is a possible approach to a proof. 
 Letting in \ref{T:A} ${\tilde X}_{\bar \gamma} : = {\tilde \pi}^{-1}(\bar \gamma)$ and ${\tilde X}_0 $ respectively denote the geometric generic and special fibers of $\tilde \pi : {\tilde \sX} \to U$, notice that the morphisms 
${\tilde \pi}_{\bar \gamma}:{{\tilde X}}_{\bar \gamma} \to X_{\bar \gamma}$ and 
 ${\tilde \pi}_{0}:{{\tilde X}}_{0} \to X_{0}$ 
 induced by $\tilde \pi$ are good resolutions of $X_{\bar \gamma}$ and $X_0$ respectively. In the notation of \ref{T:A}, $X_{0}$ can be identified to $X_1$ of that theorem. We intend to apply \ref{V:Se4} to the resolutions 
${\tilde \pi}_{\bar \gamma}$
and
 ${\tilde \pi}_{0}$. To that purpose, note that according to \ref{T:A}, the components of the exceptional locus $Ex(\tilde \pi _0)$ can be identified with $E_1, \ldots, E_m$. Let the components of $Ex({\tilde \pi}_{\bar \gamma})$ be denoted by $E'_1, \ldots, E'_m$. To apply \ref{V:Se4} we need to know that (after reordering indices if necessary) we have equalities of intersection numbers 
 $(E_i.E_j) = (E'_i.E'_j)$, for all possible pairs $(i,j)$. If so, the equality (1) easily follows from \ref{V:Se4}.

The point is to define a total intersection number 
$({\tilde E}_i. {\tilde E}_j)$ 
for the components of the exceptional divisor of the resolution $\tilde f$ in Theorem \ref{T:A}. An elementary way of doing this is to follow, with minor changes, Chapter IV, Section 1 of \cite{Sh}.
 That is, if ${\tilde E}_i$ and ${\tilde E}_j$ meet properly, the intersection having as irreducible components curves $C_1, \ldots, C_r$, with generic points 
$P_1, \ldots, P_s$ respectively, let $\la _i:=length (\cO_{C_i, P_i})$ and define  
 $({\tilde E}_i.{\tilde E}_j)=\Sigma_{i=1}^{r} \la _i$. If the intersection is not proper (e.g., when $i=j$) substitute $\tilde E _j$ by a suitable linearly equivalent divisor that
 meets  ${\tilde E}_i$ properly, and follow the previously described prescription. 

Probably these numbers can be defined in a sleeker way using more advanced intersection theories, e.g., that of \cite{F}.

Once we have these intersection numbers, we should verify that if $E_i$ (resp. $\bar E _i$) is the intersection of ${\tilde E}_i$ with the special fiber (resp. geometric generic fiber) of $\tilde \pi$, then    
$$ (*) \quad ({\bar E}_i. {\bar E}_j) = 
 ({\tilde E}_i. {\tilde E}_j) =
 ({ E}_i. {E}_j)$$
for all pairs $(i,j)$. Note that ${ E}_i$ and ${\tilde E}_i$ are respectively the irreducible components of the exceptional divisors in the induced good resolutions of the special and geometric general fibers of $\tilde \pi$.

The first equality in $(*)$ is simple, the second looks more complicated. It would be an extra ``equisingularity'' condition satisfied by Artin families.

If we use Theorem \ref{T:A} starting with a {\it very good resolution} $f:X_1 \to X$, in case  $i \not= j$, the equality $(E_i.E_j)=(E'_i.E'_j)$ is more easily proved. Indeed, 
 although not stated in \cite{A}, the method of proof yields the following result. If the 
 $I=\{ (i,j): i \not=j ~and ~ E_i \cap E_j \not= \emptyset  \}$, then for each $(i,j) \in I$ there is a section $\sigma _{i,j}$ of the induced projection 
$\tilde \sX \to U$ such that $Im (\sigma_{i,j})=\cE_i \cap \cE_j$. Note that $E_i \cap E_j \not= \emptyset $ means that $E_i \cap E_j$ has just one point.

Perhaps in the proof $(*)$ in general it also will help to assume that $f$ in \ref{T:A} is a {\it very good resolution}.
\end{rem}

\section{}
\label{S:T} 
\label{V:T1}

\begin{voi}
\label{T1}  In  Section \ref{S:C}  we have studied how  to develop a theory of the  arc scheme $\sX _{\infty}^{(s)}$ when we work with a nice geometric triple, see \ref{C2}. Notice that under the assumptions of Theorem \ref{T:A} we have a nice geometric triple associated to an Artin family. 

Namely, with the notation of that theorem, the triple is $(\sX,s,U)$. The only part that perhaps is not obvious is the fact that (with $I$ the ideal of $\tilde R$ defining $Im(s) \subset \sX$), $\tilde R$ is $I$-complete. But it is known that a noetherian local ring, complete with respect to the maximal ideal, is $I$-complete for any ideal $I$ of $R$ (see \cite{MA}, p. 63). 

Consequently, in this Section we shall apply the results of Section 2 to this triple. 

\end{voi}

\begin{voi}
\label{T2.0}
 We hope that the triple $(\sX,s,U)$ associated to an Artin family just introduced satisfies condition ($NO$) of \ref{C11}. Note that in this case $U$ has two points, the generic one $\gamma$ and the closed one $\bf o$. Now the condition 
$\cA \cap (\sX _{\infty}^{(s)})_{\gamma} \not= \emptyset$ (in the notation of \ref{C11}) means:

{\it (C) no irreducible component of $(\sX _{\infty}^{(s)})_{\gamma}$ is contained in ${(\sX ^{(s)}_{\infty})}_{(\bf o)} = (X_0)^{s({\bf o})}_{\infty}$}

To prove assertion $(C)$ it seems convenient to use the language of {\it specializations}, that we review next. 
\end{voi}

\begin{voi}
\label{T2.1}
Consider a nice geometric triple $(\sX,s,U)$. For simplicity, we assume $U=\Spec (\La)$, where $\La$ is a discrete valuation ring. In the sequel, if $K$ is a field, ${\mathbb D}_K$ denotes $\Spec(K[[t]])$.

Let $\alpha: {\mathbb D}_K \to \sX$ be an $s$-arc. Suppose we have a morphism 
$${\tilde \alpha} : \Spec (K[[u,t]]) \to \sX$$
(with $u,t$ analytically independent over $K$, note $K[[u,t]]=K[[u]][[t]]$) such that for $u=0$ the induced morphism is $\alpha$. If $K \subseteq L$ is a field extension, there is an induced $s$-arc 
$${\tilde \alpha}_L: \Spec (L((u))[[t]]) \to \sX$$
We say that the arc $\alpha$ is a {\it direct specialization} of the arc ${\tilde \alpha}_L$.

Notice that if $\alpha$ is a direct specialization of ${\tilde \alpha}_L$, and (according to (2) in \ref{C8}), $P$ and $Q$  respectively are the corresponding points in  ${\sX}^{(s)}_{\infty}$, then $P$ is in the Zariski closure of $Q$. That is, $P$ is a specialization of $Q$ in the sense of the theory of schemes. 

One could introduce a concept of {\it specialization} as a concatenation of direct specializations. We do not go into the details since we are not going to use this more general notion. 
\end{voi}
 
\begin{voi}
\label{T2.2}
So, to prove $(C)$ it suffices to prove this statement:

{\it (D) Any arc $\phi : \Spec (K[[t]]) \to X_0 \subset \sX$ is a direct specialization of an arc $\psi : \Spec (L[[t]]) \to X_0 $ (for a suitable field $L \supseteq K$) such that 
$Im(\psi) \nsubseteq X_0$.}

We prove that $(D)$ is valid when the base field $k$ (in \ref{T:A}) has zero characteristic. We do not have a proof in the positive characteristic  case (the one interesting to us), but in \ref{Tne} we shall make some comments on the difficulties to extend the proof to this situation. 

For simplicity we prove $(D)$ if $K=k$ (the base field), the general case is similar. So, letting $\mathbb D$ denote $\Spec (k[[t]])$, consider an arc 
$$\phi: \D \to X_0 \subset \sX ~,$$
it must send the origin of $\D$ to $s(\bf o)$, the single closed point of $\sX$. Take the desingularization 
$\tilde f : {\tilde \sX} \to \sX$ of \ref{T:A} and 
lift $\phi$ to an arc $\tilde{\phi} : \D \to \tilde{\sX}$, note that  
 $Im(\tilde{\phi}) \subset {\tilde{\sX}}_{\bf o} =\tilde{X _0}$, the desingularization of $X_0$ induced by $f$ of \ref{T:A}. Again the image of the closed point of $\D$ is a closed point $Q$ of $\sX$. We 
 get an induced local homomorphism of local rings $R={\cO}_{\tilde{\sX},Q} \to k[[t]]$ and, taking completions, one 
$$\varphi: {\tilde R} \to k[[t]] ~, $$ 
where ${\tilde R}$ denotes the completion  of  ${\cO}_{{\tilde \sX},Q} $ with respect to its maximal ideal.

Now, from the conditions satisfied by the exceptional locus ${\tilde f}^{-1}(S)$ in Theorem \ref{T:A}, there are two possibilities for the point $Q$:

(a) $Q$ belongs to a single component, say $\cE _1$, of ${\tilde f}^{-1}(S)$, or

(b) $Q$ belongs to two components, say $\cE _1$ and $\cE _2$, of ${\tilde f}^{-1}(S)$. 

Since ${\tilde R}$ is a regular local complete 3-dimensional $k$-algebra, whose residue field is again $k$, we may write ${\tilde R}=k[[u,x_1,x_2]]$ and we may choose the parameters in such a way that:

($\alpha$) In case (a), $x_1$ defines $\cE _1$ at $Q$,

($\beta$)
In case (b), $x_i$ defines $\cE _i$ at $Q$, $i=1,2$, and the ``$u$-axis'' coincides with $\cE_1 \cap \cE_2$. 

Consider the homomorphism $\varphi$, let $\varphi(x_i)=\phi _i(t) \in k[[t]]$, $i=1,2$. Then the homomorphism 
$${\tilde \phi} : {\tilde R} = k[[u,x_1,x_2]] \to k[[u,t]]$$
sending $u$ to $ut$ and $x_i$ to $\phi_i (t)$, $i=1,2$ defines a morphism 
$${\tilde \psi} : \Spec(k[[u,t]]) \to \Spec({\tilde R})\, .$$
Composing with the morphism $\Spec({\tilde R}) \to \Spec(R)$ we see that $\phi$ is a direct specialization of the arc 
$\psi : \Spec (k((u))[[t]]) \to \sX$ induced by $\tilde \phi$, and clearly its image is not included in $X_0$.

This proof is inspired by that of Lemma 2.12 in \cite{IK}.

\end{voi}
\begin{voi}
\label{Tne}
Can the proof of \ref{T2.2} be adapted to show that an Artin family satisfies condition $(NO)$ in case the discrete valuation ring $\Lambda$ in Theorem \ref{T:A} is not equicharacteristic?

The reduction to proving statement ($D$) in \ref{T2.2} is general, including the discussion on specializations in \ref{T2.1}. Concerning ($D$), the given arguments again work in general, up to the point where it is claimed that the local ring $\tilde R$ is a power series ring with coefficients in the field $k$ or, letting $\Gamma = k[[u]]$ (a complete discrete valuation ring with uniformizing parameter $u$), that 
$\tilde R = \Gamma [[x_1,x_2]]$. The claim in \ref{T2.2} is valid by applying Cohen's Structure Theorem in the equicharacteristic case. If $\Lambda$ (in \ref{T:A}) is a complete local discrete valuation ring with field of fractions of characteristic zero and residue field of characteristic $p > 0$, then $\tilde R$ is no longer equicharacteristic. If $M=max(\tilde R)$ and $k=\tilde R /M$ has characteristic $p > 0$, then Cohen's Theorem becomes more complicated. Namely, there are two cases (see \cite{MA}, Section 29):

(i) $p=p {1_{\tilde R}}$ is in $M$, 
but $p^2 \notin M$ (the {\it unramified} case.)

(ii) $p^2 \in M$ (the {\it ramified} case.)

In case (i), $\tilde R$ contains a {\it Cohen ring} (i.e., a complete discrete valuation ring $\Gamma$ with uniformizing parameter $p$, containing $\mathbb Z$), and $\tilde R \approx \Gamma [[x_1,x_2]]$ (a power series ring). 

In case (ii), $\tilde R$ contains an unramified complete regular local subring $R_1$ such that the extension $R_1 \subset {\tilde R}$ is {\it Eisenstein}. This means that $\tilde R$ is isomorphic to 
$R_1[T]/(h)$, where $h$ is a polynomial $T^m + a_{1}T^{m-1} + \cdots + a_m \in R_1[T]$, where $a_i \in max(R_1)$ for all $i$ but $a_m \notin max(R_1)^2$. 

In case (i) one is tempted to define a homomorphism $\Gamma \to \Gamma[[t]]$ sending $p$ to $pt$, and repeat the argument of \ref{T2.2}. But we do not see how to get such a homomorphism, or some alternative homomorphism 
$\Gamma[[x_1,x_2]] \to \Gamma[[t]]$ (for suitable elements $x_1,x_2$), allowing us to finish the proof as in \ref{T2.2} in this unramified case..

Case (ii) involves a further complication. If it were possible to find the desired specialization in case (i), then we could apply the result to $R_1$ but then we should reach a similar conclusion for its extension $\tilde R$.
 It does not seem clear how to accomplish this.  
\end{voi}
\begin{voi}
 \label{T2}
Summarizing, if $(\sX,s,U)$ is a nice triple associated to an Artin family satisfying condition $(NO)$ (we hope that always this will be the case) then by Proposition \ref{P:irr} and its Corollary, the arc space 
 $\sX_{\infty}^{(s)}$ 
has a finite number of irreducible components $\cA_1, \ldots, \cA_a$.
 Moreover, if the geometric generic fiber $(\sX_{\infty}^{(s)})_{\gamma '}$ has $a'$ irreducible components, then $a' \ge a$ (see  the proof of Corollary \ref{C:fin}). 

In other words,  if $X_{\gamma '}$ is the geometric general fiber of $\sX \to U$, then 
$N_n(X_{\gamma '}) \ge a$, and  a related interesting question may be posed: is $N_n(X_{\gamma'}) = a$?.
\end{voi}

\begin{voi}    
 \label{V:T8} To finish the proof of the surjectivity of the Nash map $\cN$ in characteristic $p$, we would like to have the following chain of inequalities:
$$ (1) \qquad N_r(X_0,P_0) \le N_r(X_{\gamma '},\bar Q)=  N_n(X_{\gamma '},\bar Q) \le N_n(X_0,P_0)$$
where we employ the ``usual'' notation, in particular $\gamma '$ denotes the geometric general point. 

Of the inequalities (1), the first one would be correct if the conclusion of Remark \ref{R:Ss} were valid, and we would get an equality. 
 
The second inequality is correct by the main result of \cite{FP}, moreover it is an equality.

The third one looks harder and to deal with it, next we present some comments on a possible proof. 
\end{voi}

\begin{voi}
 \label{E1} 

An affirmative answer to a question about Artin Families would imply the validity of the third inequality of the string (1) in \ref{V:T8}. The question is about a possible property of a family that would indicate some kind of equisingularity.

Consider an Artin family as in \ref{T:A}.   
 In particular, the family is parametrized by $U=\Spec (\La)= \{\bf o, \gamma \}$, $\La$ a non equicharacteristic complete discrete valuation ring, and $\gamma$ the generic point of $U$.

Let $U':=  \{\bf o, \gamma' \} $, where $\gamma'$ denotes the geometric generic point of $U$ (i.e., 
the natural morphism $\Spec(L) \to U$, where $L$ is the algebraic closure of the field $F=k(\gamma)$, that is of the fraction field of $\Lambda$). For any $u \in U'$, let $Y_u$ denote the fiber at $u$ of the morphism 
$\tilde \pi = \pi \tilde f: \sX _{\infty}^{(s)} \to U $ and $X_u$ the fiber at $u$ of $\pi:\sX \to U$ (notation of \ref{T:A}). If $W$ is a scheme,  $Ir(W)$ indicates the set of irreducible components of $W$ and 
$\cC(W) $ the set of closed subsets of $W$.

Then for any $u \in U'$ we have a mapping of sets 
$$F_u : Ir(\sX _{\infty}^{(s)}) \to \cC(Y_u) \, ,$$ 
where for an irreducible component $\cA$ of $\sX _{\infty}^{(s)}$, $F_u(\cA) = \cA \cap Y_u$. 

Our question is:

($E$) {\it Does $F_u$ induce a bijective mapping $\tilde {F_u} : Ir(\sX _{\infty}^{(s)}) \to Ir(Y_u)$ for all $u \in \bar U$?}

More explicitly, ($E$) requires to prove:

\begin{itemize}

\item[(1)] If $\cA$ is an irreducible component of $\sX _{\infty}^{(s)}$, then $F_u(\cA)$ is an irreducible component of $Y_u$ (so there is an induced map 
$\tilde {F_u} : Ir(\sX _{\infty}^{(s)}) \to Ir(Y_u)$).

\item[(2)] $\tilde {F_u}$ (or $F_u$) is injective.

\item[(3)] If $C$ is an irreducible component of $Y_u$, then there is an irreducible component $\cA$ of $\sX _{\infty}^{(s)}$ such that $C=\cA \cap Y_u $

\end{itemize}

 We know that 
 $Ir(Y_{\gamma '})$ is finite, 
 say of cardinality $a$. If $(E)$ has a positive answer, then by the bijections, both $Ir(Y_0)$ and $Ir(\sX _{\infty}^{(s)})$ are finite, with cardinality $a$. In other words, 
$$N_n(X_{\gamma '}) = N_n(X_0) \, ,$$
and thus the third inequality of (1) \ref{V:T8} is valid, and  actually it is an equality.
\end{voi}

\begin{voi}
 \label{E2}
Even if not every Artin family satisfies condition ($E$), one could ask: given $X$ as in \ref{T:A}, can we construct an Artin family satisfying condition ($E$)? 

\end{voi}

\begin{voi}
\label{E3}

This article has described an approach that might yield a proof of the bijectivity of the Nash map $\cN$ for surfaces in positive characteristic. Our approach views a surface of such nature as the special member of a one parameter family of surfaces, whose general member is defined over a field of characteristic zero. To address such vision, the approach introduces the notion and properties of {\it geometric triples} (\ref{C2}), and finds in a ``lifting theorem'' of M. Artin, which determines in a natural way a geometric triple, a reasonable candidate to be the family needed. Following the approach, some properties of the targeted family have been proven. However, other properties should be further addressed if the proof envisioned is to be complete. 

Specifically, the following would be needed:

\begin{itemize}

\item[(a)] to prove Equality (1) in Remark \ref{R:Ss}, for instance as  suggested introducing a suitable  Intersection Theory,

\item[(b)] to prove that triples coming from an Artin family satisfy Condition (NO) of \ref{C11}, see \ref{T2.0} and,

\item[(c)] to prove  the inequality $N_n(X_{\gamma}, \bar Q) \le N_n(X_0,P_0)$  in (1) of \ref{V:T8}, which would follow if one could answer affirmatively question ($E$) in \ref{E1}, see also \ref{E2}.

\end{itemize}

\end{voi}

\providecommand{\bysame}{\leavevmode\hbox to3em{\hrulefill}\thinspace}


\begin{thebibliography}{10}
\bibitem{A} M. Artin \emph{Lifting of two-dimensional singularities to characteristic zero}, American J. of Mathematics 88, 747-762 (1966)
\bibitem{D} T. De Fernex \emph{The space of arcs of an algebraic variety}, arXiv:1604.02728v1 (2016)
\bibitem{D3} T. De Fernex \emph{Three-dimensional counter-examples to the Nash problem} Compositia Mathematica 149, 1519-1534 (2013)
\bibitem{DD} T. De Fernex and R. Docampo \emph{Terminal valuations and the  Nash problem}, Inventiones Matematicae 203, 303-331 (2015)
\bibitem{FP} J. Fern{\'a}ndez de Bobadilla and M. Pe Pereira \emph{The Nash problem for surfaces}, Ann. Math 176, 2003-2029 (2012)
\bibitem{E} D. Eisenbud \emph{Commutative Algebra}, Springer Verlag, New York (1994)
\bibitem{F} W. Fulton, \emph{Intersection Theory}, Springer Verlag, Berlin (1984)
\bibitem{G} A. Grothendieck \emph{{\'E}l{\'e}ments de G{\'e}om{\'e}trie Alg{\'e}brique, III}, Publ. Math. I.H.E.S. 11, (1961)
\bibitem{H} R. Hartshorne \emph{Algebraic Geometry}, Springer Verlag, New York (1977)
\bibitem{IK} S. Ishii and J. Kollar \emph{The Nash problem on arc families of singularities}, Duke Math. Journal 120, 601-620 (2003)
\bibitem{MA} H. Matsumura \emph{Commutative Rings}, Cambridge University Press, Cambridge (1989)
\bibitem{NA} J.Nash \emph{Arc structure of singularities}, Duke Math. J. 81, 31-38 (1995) 
\bibitem{NO} A. Nobile \emph{On Nash theory of arc structure of singularities}, Ann. Mat. Pura Appl. 160, 129-146 (1991)
\bibitem{PS} C. Pl{\'e}nat and M. Spivakovsky \emph{The Nash problem and its solution: a survey}, Journal of Singularities 13, 229-244 (2015)
\bibitem{Sh} I. R. Shafarevitch \emph{Basic algebraic geometry}, Springer-Verlag, Berlin (1977)

\end{thebibliography}
\end{document}